\documentclass{amsart}

\usepackage{amssymb, amsthm, fancyhdr,amsmath,enumerate,graphicx,multicol}
\usepackage{subcaption}
\usepackage[inline]{enumitem}
\usepackage[numbers]{natbib}
\usepackage{color}
\usepackage{mathrsfs}
\usepackage{bbm}
\usepackage{appendix}
\usepackage[colorlinks=true,linkcolor=blue]{hyperref}
\usepackage{tcolorbox}
\usepackage{mathtools}
\mathtoolsset{showonlyrefs=true}
\usepackage[left=1.2in,top=1.3in,right=1.2in,bottom=1.3in]{geometry}
\usepackage{tikz-cd}
\usepackage{accents}

\DeclareMathOperator{\atv}{ATV}
\DeclareMathOperator{\tv}{TV}
\DeclareMathOperator{\cpl}{Cpl}
\DeclareMathOperator{\cplbc}{Cpl_{bc}}

\DeclareMathOperator{\id}{id}

\def\a{\mathcal{A}}

\def\sp{\mathcal{P}}

\def\x{\mathcal{X}}

\def\w{\mathcal{W}}

\def\P{\mathbb{P}}

\def\R{\mathbb{R}}

\def\ep{\varepsilon}

\newcommand{\abs}[1]{\left\vert#1\right\vert}

\newcommand{\ind}[1]{\mathbbm{1}_{#1}}
\newcommand{\indep}{\mathrel{\perp\!\!\!\perp}}
\newcommand{\ent}[2]{H\left(#1\,|\,#2\right)}

\newlist{anumerate}{enumerate}{1}
\setlist[anumerate,1]{label=(\alph*)}

\newtheorem{thm}{Theorem}
\newtheorem{cor}[thm]{Corollary}
\newtheorem{df}[thm]{Definition}

\newtheorem{lem}[thm]{Lemma}

\theoremstyle{definition}

\newtheorem{rmk}[thm]{Remark}

\begin{document}

\title[Adapted $T_1$ inequality]{On a $T_1$ Transport inequality for the adapted Wasserstein distance}
\author[Jonghwa Park]{Jonghwa Park}\thanks{The author would like to thank Martin Larsson and Johannes Wiesel for their valuable comments and insightful suggestions.}
\address{Jonghwa Park\newline
Carnegie Mellon University, Department of Mathematical Sciences\newline
jonghwap@andrew.cmu.edu}
\keywords{adapted Wasserstein distance, adapted optimal transport, weighted total variation, transport inequality, relative entropy}
\date{}


\begin{abstract}
The $L^1$ transport-entropy inequality (or $T_1$ inequality), which bounds the $1$-Wasserstein distance in terms of the relative entropy, is known to characterize Gaussian concentration. To extend the $T_1$ inequality to laws of discrete-time processes while preserving their temporal structure, we investigate the \emph{adapted} $T_1$ inequality which relates the $1$-adapted Wasserstein distance to the relative entropy. Building on the Bolley--Villani inequality, we establish the adapted $T_1$ inequality under the same moment assumption as the classical $T_1$ inequality.  
\end{abstract}

\maketitle

\section{Introduction}

Throughout this work, we consider Borel probability measures on a Polish space $\x$ equipped with a metric $d$. We denote by $\sp(\x)$ the set of all Borel probability measures on $\x$.

Over the last decades, the $L^p$ \textit{transport-entropy} inequality, which controls the Wasserstein distances by the relative entropy, has been extensively studied due to its deep connections with concentration of measure and deviation inequalities; see \cite{gozlan2010transport} for a general overview. In this article, we extend the transport-entropy inequality to laws of discrete-time processes by using the \textit{adapted Wasserstein distance}, a variant of the Wasserstein distance, particularly well suited for measuring distances between laws of processes. More precisely, we aim to establish a transport inequality that relates the adapted Wasserstein distance to the relative entropy under minimal assumptions on the reference measure.

Before we proceed, let us recall some basic definitions. For $1\le p<\infty$, the $p$-\textit{Wasserstein distance} between $\mu\in \sp(\x)$ and $\nu\in \sp(\x)$ is defined via
\begin{align}
    \w_p(\mu, \nu)
    :=\inf_{\pi \in \cpl(\mu, \nu)}\left(\int_{\x\times \x} d(x,y)^p \pi(dx, dy)\right)^{1/p}\label{eq:wasserstein distance}
\end{align}
where $\cpl(\mu, \nu)$ is the set of all couplings between $\mu$ and $\nu$. Note that $\w_p$ is a metric on $\sp_p(\x)$, the set of all $\rho\in \sp(\x)$ such that $\int_{\x}d(x, x_0)^p \rho(dx)<\infty$. We say that $\mu\in \sp_p(\x)$ satisfies the $L^p$ \textit{transport-entropy} inequality (also called the $T_p$ inequality) if there exists some constant $C>0$ such that
\begin{align}
    \w_p(\mu, \nu)\le C\sqrt{2\ent{\nu}{\mu}} \text{ for all }\nu\in \sp(\x).\label{eq:w_pH inequality}
\end{align}
If \eqref{eq:w_pH inequality} holds with constant $C$, 
then $\mu$ is said to satisfy the $T_p(C)$ inequality. Here, $\ent{\cdot}{\mu}$ denotes the relative entropy with respect to $\mu$, defined as
\begin{align}
    \ent{\nu}{\mu}
    :=\begin{cases}
        \int_{\x} \log\left(\frac{d\nu}{d\mu}\right)d\nu & \text{ if } \nu\ll \mu,\\
        +\infty & \text{ otherwise.}
    \end{cases}
\end{align}

There are well-established connections between $T_p$ inequalities and concentration of measure. In particular, when $p=1$, it is shown in \cite{bobkov1999exponential, djellout2004} that the $T_1$ inequality is equivalent to Gaussian concentration.

\begin{thm}[Theorem $22.10$ in \cite{villani2008optimal}]\label{thm:gaussian concentration} Let $\mu\in \sp_1(\x)$ and $C>0$. Each of the following statements implies the next one:
\begin{enumerate}[label=(\alph*)]
    \item $\mu$ satisfies the $T_1(C)$ inequality.
    \item $\mu$ exhibits Gaussian concentration: for any $\ep>0$ and $1$-Lipschitz $f:\x\to \R$ such that $f\in L^1(\mu)$,
    \begin{align}
        \mu(\{x\in \x : f(x)\ge \int_{\x} f d\mu+\ep\})\le e^{-\frac{\ep^2}{2C^2}}.
    \end{align}
    \item $\mu$ has a finite quadratic exponential moment: for any $x_0\in \x$ and $\alpha\in (0, \frac{1}{2C^2})$,
    \begin{align}
        \int_{\x}e^{\alpha d(x, x_0)^2}\mu(dx)<\infty.
    \end{align}
    \item For any $x_0\in \x$ and $\alpha\in (0, \frac{1}{2C^2})$, $\mu$ satisfies the $T_1(C')$ inequality where
    \begin{align}
        C'=\frac{1}{\sqrt{\alpha}}\left(1+\log \int_{\x} e^{\alpha d(x, x_0)^2}\mu(dx)\right)^{1/2}<\infty.
    \end{align}
\end{enumerate}
\end{thm}

The last implication from $(c)$ to $(d)$ is a special case of the following bound on the \textit{weighted} total variation, obtained by choosing $\varphi(x)=\sqrt{\alpha}d(x, x_0)$ below.

\begin{thm}[Theorem 2.1 in Bolley--Villani \cite{bolley2005weighted}]\label{thm:bolley villani}
Let $\mu, \nu\in \sp(\x)$ and $\varphi:\x\to [0,\infty)$ be a Borel measurable function. Then
\begin{align}
    \tv_{\varphi}(\mu, \nu)
    :=\int_{\x} \varphi(x)\abs{\mu-\nu}(dx)
    \le \left(1+\log \int_{\x}e^{\varphi(x)^2}\mu(dx)\right)^{1/2}\sqrt{2 \ent{\nu}{\mu}}.
\end{align}
Here, $\abs{\mu-\nu}$ denotes the total variation measure of $\mu-\nu$ defined via
\begin{align}
    \abs{\mu-\nu}(A)=\int_{A}\abs{\frac{d\mu}{d(\mu+\nu)}-\frac{d\nu}{d(\mu+\nu)}} d(\mu+\nu)
\end{align}
for all measurable $A$.
\end{thm}

Our goal is to extend the Bolley--Villani bound to the adapted Wasserstein distance when $\mu$ and $\nu$ are laws of discrete-time processes. This will follow from the \textit{adapted} version of Theorem~\ref{thm:bolley villani} which we present in Theorem~\ref{thm:adapted Bolley Villani}.

\subsection{Main results}
For the remainder of this article, we consider laws of processes. To that end, we set $\x=\prod_{t=1}^T \x_t$ where $\x_t$ is a Polish space and interpret $\mu\in \sp(\x)$ as a law of a process $X=(X_t)_{t=1}^T$ where $T$ is the number of time steps and each $X_t$ is a $\x_t$-valued random variable. The metric $d$ on $\x$ is any metric that metrizes the product topology.

Recently, it has been shown that the Wasserstein distance is not an adequate metric for time-dependent optimization problems that take the flow of information into account. For example, the value functions of optimal stopping problems, superhedging problems and stochastic programming problems are not continuous with respect to the Wasserstein distance. See \cite{backhoff2020all, pflug2012distance, backhoff2020adapted, bartl2023sensitivity, backhoff2017causal, acciaio2024designing} for further examples and detailed discussions.

The adapted Wasserstein distance has been developed to address this issue. In contrast to the classical Wasserstein distance \eqref{eq:wasserstein distance} which minimizes the cost over all possible couplings $\pi$, the adapted Wasserstein distance is defined as the minimal cost over \textit{bicausal} couplings which we describe below.

\begin{df}[Bicausal couplings] Let $\mu, \nu\in \sp(\x)$. A coupling $\pi \in \cpl(\mu, \nu)$ is bicausal if for $(X,Y)\sim \pi$ and $t\in \{1,2,\ldots, T-1\}$,
\begin{align}
    Y_1, \ldots, Y_t \indep_{X_{1}, \ldots, X_{t}} X \text{ and }
    X_1, \ldots, X_t \indep_{Y_{1}, \ldots, Y_{t}} Y \text{ under } \pi.
\end{align}
We denote by $\cplbc(\mu, \nu)$ the set of all bicausal couplings between $\mu$ and $\nu$.
\end{df}

\begin{df}[The adapted Wasserstein distance]
Let $\mu, \nu\in \sp(\x)$ and $1\le p<\infty$. The $p$-adapted Wasserstein distance between $\mu$ and $\nu$ is defined via
\begin{align}
    \a\w_p(\mu, \nu)
    =\inf_{\pi \in \cplbc(\mu, \nu)}\left(\int_{\x\times \x} d(x,y)^p \pi(dx, dy)\right)^{1/p}.
\end{align}
\end{df}
The adapted Wasserstein distance $\a\w_p$ is well suited for studying optimization problems in which the flow of information plays a central role (see \cite{backhoff2020all}). For example, it induces the smallest topology under which optimal stopping problems have continuous value functions. While the topology induced by $\a\w_p$ is well understood, the metric
$\a\w_p$ remains difficult to control quantitatively. This poses a significant limitation in applications where quantitative control of $\a\w_p$ is required.

In this paper, we address this limitation by comparing the adapted weighted total variation with the relative entropy. More precisely, Theorem~\ref{thm:adapted Bolley Villani} extends the Bolley--Villani bound (Theorem~\ref{thm:bolley villani}) and shows that the adapted weighted total variation can be bounded in terms of the relative entropy, under suitable moment assumptions. This result provides a new quantitative tool linking an adapted optimal cost to the relative entropy, a fundamental concept in information theory. In particular, Corollary~\ref{cor:aw_1 H inequality} shows that a measure $\mu$ satisfies the adapted $T_1$ inequality if and only if $\mu$ has subgaussian tails.

We now introduce the adapted weighted total variation and present our main
result, Theorem~\ref{thm:adapted Bolley Villani}.

\begin{df}\label{def:adapted tv}
Let $\mu, \nu\in \sp(\x)$ and $\varphi:\x\to [0,\infty)$ be a Borel measurable function. We define the adapted weighted total variation as
\begin{align}
    \atv_{\varphi}(\mu, \nu)
    =\inf_{\pi \in \cplbc(\mu, \nu)}\int_{\x\times \x} (\varphi(x)+\varphi(y))\ind{\{x\neq y\}}\pi(dx, dy).
\end{align}
\end{df}
\begin{rmk}
Note that the weighted total variation can be expressed as
\begin{align}
    \tv_{\varphi}(\mu, \nu)
    =\inf_{\pi \in \cpl(\mu, \nu)}\int_{\x\times \x} (\varphi(x)+\varphi(y))\ind{\{x\neq y\}} \pi(dx, dy)
\end{align}
where the infimum is attained when $\pi(dx, dy)=(\id, \id)_{\#}(\mu\wedge \nu)$ on $\{x=y\}$. For example, see \cite[Section 2]{Torgny1999} or \cite[Lemma 3.1]{acciaio2025estimating}. Here, $\mu\wedge \nu$ is the minimum between $\mu$ and $\nu$, defined by
\begin{align}
    \mu\wedge \nu(A)=\int_{A} \left(\frac{d\mu}{d(\mu+\nu)}\wedge \frac{d\nu}{d(\mu+\nu)}\right) d(\mu+\nu)
\end{align}
for all measurable $A$. In this context, $\atv_{\varphi}$ can be viewed as a natural extension of $\tv_{\varphi}$. The study of $\atv_{\varphi}$ is not new and related works will be discussed in the next section.    
\end{rmk}

\begin{thm}\label{thm:adapted Bolley Villani}
Let $\mu, \nu\in \sp(\x)$ and $\varphi:\x\to [0,\infty)$ be a Borel measurable function. Then
\begin{align}
    \atv_{\varphi}(\mu, \nu)
    \le (2\sqrt{T}+1)
    \left(1+\log \int_{\x}e^{\varphi(x)^2}\mu(dx)\right)^{1/2}
    \sqrt{2 \ent{\nu}{\mu}}.\label{eq:adapted Bolley Villani}
\end{align}
\end{thm}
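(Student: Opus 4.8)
The plan is to reduce the adapted statement to the non-adapted Bolley--Villani bound (Theorem~\ref{thm:bolley villani}) applied time-step by time-step, gluing the resulting couplings into a single bicausal coupling. The key observation is that a bicausal coupling can be built by specifying, for each $t$, a coupling of the conditional laws $\mu(dx_t \mid x_{1:t-1})$ and $\nu(dy_t \mid y_{1:t-1})$ in a measurable way, and then concatenating. The chain rule for relative entropy, $\ent{\nu}{\mu} = \sum_{t=1}^T \E_\nu\!\left[\ent{\nu(\cdot\mid Y_{1:t-1})}{\mu(\cdot\mid Y_{1:t-1})}\right]$ — which requires a small care in matching the conditioning variable — will be the mechanism by which the total entropy budget is distributed across the steps.

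First I would fix notation for disintegrations: write $\mu = \bigotimes_{t=1}^T \mu_t(x_{1:t-1}, dx_t)$ and similarly for $\nu$. For the coupling construction I would proceed inductively: given $(x_{1:t-1}, y_{1:t-1})$, I want a coupling $\pi_t(x_{1:t-1},y_{1:t-1}; dx_t, dy_t)$ of $\mu_t(x_{1:t-1},\cdot)$ and $\nu_t(y_{1:t-1},\cdot)$. Here one subtlety arises: for bicausality we need $\pi_t$ to depend measurably on the pair $(x_{1:t-1},y_{1:t-1})$, but the conditional entropy naturally lives on the $\nu$-side conditioning $y_{1:t-1}$ only. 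I would handle this by choosing $\pi_t$ to depend only on $y_{1:t-1}$ (through the two conditional measures $\mu_t(y_{1:t-1},\cdot)$ and $\nu_t(y_{1:t-1},\cdot)$), which is still a legitimate bicausal coupling, and select $\pi_t$ to be near-optimal for $\tv_{\varphi_t}$ between these two conditionals, where $\varphi_t$ is an appropriately chosen "one-step" weight. Standard measurable-selection arguments (the optimal $\tv_\varphi$ coupling has the explicit form recalled in the Remark) give the required measurability.

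The heart of the estimate is then bookkeeping. On the event $\{x \neq y\}$ in the full path space, let $\tau$ be the first time the paths disagree; on $\{\tau = t\}$ we have $x_{1:t-1} = y_{1:t-1}$, so the cost $\varphi(x)+\varphi(y)$ can be controlled using the product structure of the metric and a bound like $\varphi(x) \le \sum_s \psi_s(x_s)$ for suitable one-dimensional weights $\psi_s$ (this is where the $\sqrt{T}$ factor enters: splitting $\varphi$, which is comparable to $d(\cdot,x_0)$, across $T$ coordinates costs a $\sqrt{T}$ in the exponential-moment normalization, since $e^{\varphi^2} \le \prod_t e^{T\psi_t^2}$ type inequalities require rescaling each $\psi_t$ by $1/\sqrt{T}$). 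Applying Theorem~\ref{thm:bolley villani} to each conditional pair with weight $\sqrt{T}\,\psi_t$, integrating against $\nu$, using Cauchy--Schwarz and Jensen to pull the $\log\E e^{T\psi_t^2}$ terms and the square-roots of conditional entropies out of the sum, and finally invoking the entropy chain rule, should assemble into \eqref{eq:adapted Bolley Villani}. The extra "$+1$" over $2\sqrt{T}$ presumably absorbs the contribution of the $t=1$ (deterministic initial) step or the cross terms in expanding $(\sum_t \cdots)^2$.

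\textbf{The main obstacle} I anticipate is the measurability/selection of the one-step couplings $\pi_t$ jointly in the conditioning variables together with making the entropy chain rule line up exactly — in particular ensuring the conditioning in each application of Theorem~\ref{thm:bolley villani} is on $y_{1:t-1}$ under $\nu$ while the coupling cost is still correctly integrated, so that no spurious factor is lost. A secondary technical point is handling the case $\ent{\nu}{\mu} = \infty$ (trivial) and $\nu \ll \mu$ but with some conditional entropies infinite on a null set (standard). Getting the constant to be exactly $2\sqrt{T}+1$ rather than something worse will require being somewhat careful about how the weight $\varphi$ is split and how the exponential moments of the pieces relate back to $\int e^{\varphi^2}\,d\mu$; I would expect an inequality of the form $\log\int e^{(\sqrt{T}\psi_t)^2} d\mu \le \frac1T\log\int e^{\varphi^2}d\mu$ or similar, derived from Hölder, to be the clean way to do this.
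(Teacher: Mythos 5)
Your overall architecture matches the paper's: build a bicausal coupling by concatenating one-step couplings of conditional kernels, stratify $\{x\neq y\}$ by the first time of disagreement, apply Theorem~\ref{thm:bolley villani} to each one-step conditional pair, and finish with Cauchy--Schwarz and the entropy chain rule. However, the two technical steps you flag as the heart of the estimate are where your plan breaks down, and both are handled by the paper in a different way.

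First, the coupling you propose is not a coupling of $\mu$ and $\nu$. Making $\pi_t$ depend only on $y_{1:t-1}$ and couple $\mu_t(y_{1:t-1},\cdot)$ with $\nu_t(y_{1:t-1},\cdot)$ forces the $x$-kernel at time $t$ to be $\mu_t(y_{1:t-1},\cdot)$ rather than $\mu_t(x_{1:t-1},\cdot)$, so the first marginal of the concatenated $\gamma$ is not $\mu$ once the paths have diverged. The paper's construction (see the $\gamma^{x_{1:t},y_{1:t}}$ in Lemma~\ref{lem:tv+sum}) uses the optimal total-variation coupling of $\mu^{x_{1:t}}$ and $\nu^{x_{1:t}}$ on the diagonal $\{x_{1:t}=y_{1:t}\}$ (where the two conditionings coincide anyway) and the product $\mu^{x_{1:t}}\otimes\nu^{y_{1:t}}$ off the diagonal, which gives a genuine bicausal coupling; the entropy chain rule still lines up because the coupling is trivial after the first disagreement.

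Second, and more fundamentally, the mechanism you propose for producing the $\sqrt{T}$ does not work. You want to split $\varphi(x)\le\sum_s\psi_s(x_s)$ into one-dimensional pieces and trade exponential moments via H\"older, but $\varphi$ is an \emph{arbitrary} Borel function on the product space (it need not be comparable to a metric, and need not decompose additively across coordinates), so no such splitting exists in general. The paper avoids decomposing $\varphi$ at all. Instead it exploits the fact that, under the constructed coupling, on the event $\{\tau=j\}$ the future coordinates $x_{j+1:T}$ are drawn from $\overline{\mu}^{x_{1:j}}$; integrating them out replaces $\varphi$ by its conditional expectation $\psi^{(j)}(x_{1:j})=\int\varphi\,\overline{\mu}^{x_{1:j}}(dx_{j+1:T})$, and Jensen applied to $a\mapsto e^{a^2}$ gives $\int e^{\psi^{(j)}(x_{1:j})^2}\mu^{x_{1:j-1}}(dx_j)\le\int e^{\varphi^2}\overline{\mu}^{x_{1:j-1}}(dx_{j:T})$ with no loss whatsoever (Lemma~\ref{lem:gammaj H}). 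The $\sqrt{T}$ then comes solely from $\sum_{j=1}^T h_j^{1/2}\le\sqrt{T}\bigl(\sum_j h_j\bigr)^{1/2}$, not from any renormalization of $\varphi$. Finally, the ``$+1$'' in $2\sqrt{T}+1$ is not an absorption of the first step: it is the separate $\tv_{\varphi}(\mu,\nu)$ term in Lemma~\ref{lem:tv+sum} (arising from symmetrizing $\varphi(x)+\varphi(y)$), which is bounded directly by one application of Theorem~\ref{thm:bolley villani}.
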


The Bolley--Villani bound in Theorem~\ref{thm:bolley villani} is a \textit{one-step} bound that applies at each time step. In the adapted setting, we inductively apply the Bolley--Villani bound to the conditional distributions of $\mu$ to obtain Theorem~\ref{thm:adapted Bolley Villani}. Thanks to the chain rule of entropy, the Bolley--Villani bound propagates along time without any loss of information. The proofs are provided in Section~\ref{sec:proofs}.

By properly choosing the weight function $\varphi$, we obtain the transport-entropy inequality for $\a\w_1$. In view of Theorem~\ref{thm:gaussian concentration}, the adapted $T_1$ inequality is equivalent to Gaussian concentration. 
\begin{cor}[The adapted $T_1$ inequality]\label{cor:aw_1 H inequality}
Let $\mu\in \sp_1(\x)$ and $C>0$. Each of the following statements implies the next one:
\begin{enumerate}[label=(\alph*)]
    \item $\mu$ satisfies the adapted $T_1(C)$ inequality, i.e.,
    \begin{align}
        \a\w_1(\mu, \nu)\le C\sqrt{2 \ent{\nu}{\mu}} \text{ for all } \nu\in \sp(\x).
    \end{align}
    \item $\mu$ satisfies the $T_1(C)$ inequality.
    \item For any $x_0\in \x$ and $\alpha\in (0, \frac{1}{2C^2})$, $\mu$ satisfies the adapted $T_1(C')$ inequality where
    \begin{align}
        C'=\frac{2\sqrt{T}+1}{\sqrt{\alpha}}\left(1+\log \int_{\x} e^{\alpha d(x, x_0)^2} \mu(dx)\right)^{1/2}.
    \end{align}
\end{enumerate}
\end{cor}

By the definition of the adapted Wasserstein distance $\a\w_1$, it is clear that $\w_1\le \a\w_1$ and thus the implication from $(a)$ to $(b)$ follows immediately. The main content of the above corollary lies in the last direction: $(b)$ implies $(c)$.

Similarly, we obtain the following estimates on $\a\w_p$ for $p>1$ in terms of the relative entropy which is of independent interest.
\begin{cor}[The general case]\label{cor:general p}
Let $p>1$ and $\mu\in \sp(\x)$ such that $\int_{\x} e^{\alpha d(x, x_0)^{2p}}\mu(dx)<\infty$ for some $x_0\in \x$ and $\alpha>0$. Then
\begin{align}
    \a\w_p(\mu, \nu)^p
    \le \frac{2^{p-1}(2\sqrt{T}+1)}{\sqrt{\alpha}}\left(1+\log \int_{\x} e^{\alpha d(x, x_0)^{2p}} \mu(dx)\right)^{1/2}
    \sqrt{2 \ent{\nu}{\mu}} \text{ for all } \nu\in \sp(\x).
\end{align}
\end{cor}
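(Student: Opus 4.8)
The plan is to derive Corollary~\ref{cor:general p} directly from Theorem~\ref{thm:adapted Bolley Villani}, applied to the weight function $\varphi(x) := \sqrt{\alpha}\,d(x,x_0)^p$, after an elementary convexity estimate that trades the $p$-th power transport cost for a weighted total variation cost. First I would note that we may assume $\ent{\nu}{\mu}<\infty$, since otherwise the right-hand side is $+\infty$ and there is nothing to prove.

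Next, fix an arbitrary bicausal coupling $\pi\in\cplbc(\mu,\nu)$. On the diagonal $\{x=y\}$ the cost $d(x,y)^p$ vanishes, while on $\{x\neq y\}$ the triangle inequality combined with the elementary bound $(a+b)^p\le 2^{p-1}(a^p+b^p)$, valid for $p\ge 1$, gives $d(x,y)^p\le 2^{p-1}(d(x,x_0)^p+d(y,x_0)^p)$. Integrating against $\pi$ and recalling the choice of $\varphi$,
\begin{align}
    \int_{\x\times\x} d(x,y)^p\,\pi(dx,dy)
    &\le 2^{p-1}\int_{\x\times\x}\!\bigl(d(x,x_0)^p+d(y,x_0)^p\bigr)\ind{\{x\neq y\}}\,\pi(dx,dy)\\
    &= \frac{2^{p-1}}{\sqrt{\alpha}}\int_{\x\times\x}\!\bigl(\varphi(x)+\varphi(y)\bigr)\ind{\{x\neq y\}}\,\pi(dx,dy).
\end{align}
Since the left-hand side is bounded by the final expression for every $\pi\in\cplbc(\mu,\nu)$, taking the infimum over such $\pi$ on both sides yields $\a\w_p(\mu,\nu)^p\le \frac{2^{p-1}}{\sqrt{\alpha}}\,\atv_{\varphi}(\mu,\nu)$. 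Inserting the harmless indicator $\ind{\{x\neq y\}}$ is precisely what makes the right-hand side the cost functional defining $\atv_\varphi$ in Definition~\ref{def:adapted tv}.

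Finally, I would apply Theorem~\ref{thm:adapted Bolley Villani} to this $\varphi$. Since $\varphi(x)^2=\alpha\,d(x,x_0)^{2p}$, it bounds $\atv_{\varphi}(\mu,\nu)$ by $(2\sqrt{T}+1)\bigl(1+\log\int_{\x}e^{\alpha d(x,x_0)^{2p}}\mu(dx)\bigr)^{1/2}\sqrt{2\,\ent{\nu}{\mu}}$, and combining this with the previous paragraph gives exactly the claimed estimate; the moment hypothesis on $\mu$ serves only to guarantee that the logarithmic factor is finite. I do not anticipate a genuine obstacle here: all the analytic substance is already contained in Theorem~\ref{thm:adapted Bolley Villani}, and the only steps that require any care are the convexity reduction and the observation that $d(x,y)^p=0$ on $\{x=y\}$, which together allow one to move from $\a\w_p^p$ to $\atv_\varphi$ at the cost of only the factor $2^{p-1}$. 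If one preferred, the same conclusion could be reached by working with a near-optimal bicausal coupling for $\atv_\varphi$, but passing to the infimum termwise avoids any question of attainment.
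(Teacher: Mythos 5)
Your proposal is correct and follows essentially the same route as the paper: bound $d(x,y)^p$ on $\{x\neq y\}$ by $2^{p-1}(d(x,x_0)^p+d(y,x_0)^p)$, take the infimum over bicausal couplings to obtain $\a\w_p(\mu,\nu)^p \le \tfrac{2^{p-1}}{\sqrt{\alpha}}\atv_\varphi(\mu,\nu)$ with $\varphi(x)=\sqrt{\alpha}\,d(x,x_0)^p$, and then invoke Theorem~\ref{thm:adapted Bolley Villani}. You merely spell out the convexity step and the termwise passage to the infimum which the paper states implicitly.
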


\subsection{Related works}
In this section, we present existing results and compare them with our results.

Let us recall that if the reference measure $\mu$ has independent marginals, i.e., $\mu=\otimes_{t=1}^T \gamma_t$ and each $\gamma_t$ satisfies the $T_1$ inequality \eqref{eq:w_pH inequality} with a constant $C>0$, then the tensorization argument yields
\begin{align}
    \w_1(\mu, \nu)\le \sqrt{T}C\sqrt{2\ent{\nu}{\mu}} \text{ for all } \nu\in \sp(\x)
\end{align}
where a metric $d$ on $\x=\prod_{t=1}^T \x_t$ is the $\ell^1$ sum of the componentwise metrics $d_{\x_t}$ on $\x_t$, i.e., $d(x,y)=\sum_{t=1}^Td_{\x_t}(x_t, y_t)$. As pointed out in \cite[Proposition 5.9]{backhoff2017causal}, the tensorization argument actually implies
\begin{align}
    \a\w_1(\mu, \nu)\le \sqrt{T}C\sqrt{2\ent{\nu}{\mu}} \text{ for all } \nu\in \sp(\x).\label{eq:independent}
\end{align}

There have been several attempts \cite{djellout2004, bolley2005weighted, villani2008optimal} to extend this result to the dependent case. Since standard tensorization does not apply, most of these results assume certain regularity conditions on the kernels of $\mu$. More recently, this line of work has been revisited and further developed by \cite{backhoff2017causal}. To illustrate, let $X=(X_t)_{t=1}^T$ be a $\R^T$-valued process whose law is $\mu\in \sp(\R^T)$ and denote the (regular) conditional distribution of $X_{t+1}$ given $(X_1, \ldots, X_t)=(x_1, \ldots, x_t)$ by
\begin{align}
    \mu^{x_1, \ldots, x_{t}}(\cdot)
    :=\P(X_{t+1}\in \cdot \,|\, X_1=x_1, \ldots, X_t=x_t)\in \sp(\R).
\end{align}
In \cite{backhoff2017causal}, it is assumed that the reference measure $\mu$ has Lipschitz kernels, i.e.,
\begin{align}
    \w_1(\mu^{x_1, \ldots, x_t}, \mu^{y_1, \ldots, y_t})
    \le C_{\text{lip}}\sum_{s=1}^{t}\abs{x_s-y_s}, \quad t\in \{1,2, \ldots, T-1\},
\end{align}
and satisfies the moment condition $\int e^{a_1 \abs{x_1}^2}\mu(dx_1)<\infty$, along with
\begin{align}
    \mu\text{-ess sup }\int e^{a_{t+1} \abs{x_{t+1}}^2}\mu^{x_1, \ldots, x_{t}}(dx_{t+1})<\infty, \quad t\in \{1,2, \ldots, T-1\}.\label{eq:lip mom}
\end{align}
Under these assumptions, it is established that
\begin{align}
    \a\w_1(\mu, \nu)\le (C_{\text{lip}})^T C_{\text{mom}}\sqrt{2\ent{\nu}{\mu}}
    \text{ for all } \nu\in \sp(\R^T)
\end{align}
where $C_{\text{mom}}$ is some positive constant computed from the moment assumption \eqref{eq:lip mom}. Compared to our result (Corollary~\ref{cor:aw_1 H inequality}), it relies on strictly stronger assumptions on $\mu$. Moreover, the bounding constant depends exponentially on $T$ which is not consistent with the independent case \eqref{eq:independent} where tensorization yields sublinear growth in $T$.

Our result is based on the Bolley--Villani bound (Theorem~\ref{thm:bolley villani}) and does not extend the tensorization technique beyond the independent case. We aim to investigate its generalization in future research.

As a special case of the $T_1$ inequality, we recall the classical Pinsker inequality:
\begin{align}
    \tv(\mu, \nu):=2\inf_{\pi\in\cpl(\mu, \nu)}\pi(x\neq y)
    \le \sqrt{2 \ent{\nu}{\mu}}.
\end{align}
Recently, the \textit{adapted} Pinsker inequality:
\begin{align}
    \atv(\mu, \nu)
    :=2\inf_{\pi \in \cplbc(\mu, \nu)}\pi(x\neq y)
    \le \sqrt{T}\sqrt{2 \ent{\nu}{\mu}}\label{eq:adapted pinsker}
\end{align}
was established in \cite{beiglbock2025pinsker}. As pointed out in \cite{beiglbock2025pinsker}, this is sharp. However, applying the constant function $\varphi\equiv \sqrt{\alpha}$ to our result (Theorem~\ref{thm:adapted Bolley Villani}) yields
\begin{align}
    \atv(\mu, \nu)
    =\lim_{\alpha\to \infty}\frac{1}{\sqrt{\alpha}}\atv_{\sqrt{\alpha}}(\mu, \nu)
    &\le \lim_{\alpha\to \infty}(2\sqrt{T}+1)\left(\frac{1}{\alpha}+1\right)^{1/2}\sqrt{2\ent{\nu}{\mu}}\\
    &=(2\sqrt{T}+1)\sqrt{2\ent{\nu}{\mu}}
\end{align}
with the non-optimal constant $2\sqrt{T}+1$. This lack of sharpness is not surprising as the Bolley--Villani bound (Theorem~\ref{thm:bolley villani}), which will play a central role in the proof of Theorem~\ref{thm:adapted Bolley Villani}, is not sharp in general. While the constant is not optimal, our result preserves the correct sublinear dependence on $T$.

An alternative approach to studying transport-entropy inequalities for the adapted Wasserstein distance is to bound adapted distances by their non-adapted counterparts for which efficient transport inequalities are known. One of the earliest works in this direction is \cite{eckstein2024computational} which shows that
the adapted total variation can be bounded above by the total variation. Precisely, Eckstein--Pammer \cite{eckstein2024computational} established that
\begin{align}
    \atv(\mu, \nu)
    \le C\tv(\mu, \nu)\label{eq:atv and tv}
\end{align}
holds with $C=2^T-1$. The optimal constant $C=2T-1$ was later obtained by Acciaio et al. \cite{acciaio2025estimating}. In combination with the classical Pinsker inequality, this implies
\begin{align}
    \atv(\mu, \nu)
    \le (2T-1)\tv(\mu,\nu)
    \le (2T-1)\sqrt{2\ent{\nu}{\mu}}.
\end{align}
Note that this approach does not yield the optimal dependence on $T$. 

Acciaio et al. \cite{acciaio2025estimating} further showed that if $\mu\in \sp((\R^d)^T)$ satisfies
\begin{align}
    \mu\text{-ess sup} \frac{1}{1+\sum_{s=1}^{t}\abs{x_s}}\int \abs{x_{t+1}}\mu^{x_1, \ldots, x_{t}}(dx_{t+1})<\infty \text{ for all } t\in \{1,2,\ldots, T-1\},\label{eq:watv assumption}
\end{align}
then
\begin{align}
    \atv_{\abs{\cdot}}(\mu, \nu)
    \le C^T\tv_{\abs{\cdot}}(\mu, \nu)\label{eq:watv wtv}
\end{align}
for some constant $C>0$ that arises from the assumption \eqref{eq:watv assumption}. This estimate, when combined with the Bolley--Villani bound (Theorem~\ref{thm:bolley villani}), naturally leads to the following transport inequality:
\begin{align}
    \a\w_1(\mu, \nu)\le \frac{C^T}{\sqrt{\alpha}}\left(1+\log \int e^{\alpha\abs{x}^2}\mu(dx)\right)^{1/2} \sqrt{2 \ent{\nu}{\mu}} \text{ for all } \nu\in \sp((\R^d)^T).
\end{align}
While the estimate \eqref{eq:watv wtv} is of independent interest, our result improves upon the transport inequality deduced from it. In particular, the exponential dependence on $T$ is reduced to $\sqrt{T}$, up to a logarithmic factor. Moreover, we establish the adapted $T_1$ inequality under the minimal assumption without requiring any regularity conditions such as \eqref{eq:watv assumption}.

\section{Proofs}\label{sec:proofs}
Let us first set up some notation. Recall that $\x=\prod_{t=1}^T \x_t$. We use the shorthand notation $\x_{s:t}=\x_s\times\cdots\times \x_t$ for $1\le s\le t\le T$. Similarly, for $x=(x_1, \ldots, x_T)\in \x$, we denote $x_{s:t}:=(x_s, \ldots, x_t)$. For $\mu\in \sp(\x)$, $\mu_t\in \sp(\x_t)$ is the projection of $\mu$ onto the $t$-th coordinate and $\mu_{1:t}\in \sp(\x_1\times\cdots\times \x_t)$ is the projection of $\mu$ onto the first $t$ coordinates, i.e.,
\begin{align}
    \mu_t(A):=\mu(\x_1\times\cdots\times \x_{t-1}\times A\times \x_{t+1}\times \cdots \times \x_{T}),\quad
    \mu_{1:t}(A):=\mu(A\times \x_{t+1}\times \cdots \times \x_{T}).
\end{align}
For $x\in \x$ and $\mu\in \sp(\x)$, we define $\mu^{x_{1:t}}\in \sp(\x_{t+1})$ for $t\in \{1,\ldots, T-1\}$ via
\begin{align}
    \mu^{x_{1:t}}(dx_{t+1})
    =\P(X_{t+1}\in dx_{t+1} \,|\, X_{1}=x_1, \ldots, X_t=x_t).
\end{align}
where $X=(X_t)_{t=1}^T\sim \mu$. Similarly, we define $\overline{\mu}^{x_{1:t}}\in \sp(\x_{t+1:T})$ for $t\in \{1,\ldots, T-1\}$ via
\begin{align}
    \overline{\mu}^{x_{1:t}}(dx_{t+1:T})
    =\P(X_{t+1:T}\in dx_{t+1:T} \,|\, X_{1}=x_1, \ldots, X_t=x_t)
\end{align}
for $X\sim \mu$ or equivalently,
\begin{align}
    \overline{\mu}^{x_{1:t}}(dx_{t+1:T})
    =\mu^{x_{1:T-1}}(dx_{T})\cdots \mu^{x_{1:t}}(dx_{t+1}).
\end{align}
Note that we have the following disintegrations
\begin{align}
    \mu(dx)
    =\mu^{x_{1:T-1}}(dx_T)\cdots\mu^{x_1}(dx_2)\mu_1(dx_1)
    =\overline{\mu}^{x_{1:t}}(dx_{t+1:T})\mu_{1:t}(dx_{1:t}).
\end{align}

We denote the identity map on $\x_t$ by $\id_{\x_t}:\x_t\to \x_t$. For any positive measure $\rho$, the pushforward measure of $\rho$ through a measurable map $f$ is denoted by $f_{\#}\rho$.

If $\gamma$ is a finite signed measure, we write its Jordan decomposition as $\gamma=\gamma_{+}-\gamma_{-}$, where $\gamma_{+}$ and $\gamma_{-}$ are the positive and negative variations of $\gamma$, respectively.

\begin{lem}\label{lem:tv+sum}
Let $\mu, \nu\in \sp(\x)$ and $\varphi:\x\to [0,\infty)$ be a Borel measurable function such that
\begin{align}
    \int_{\x}\varphi(x)(\mu+\nu)(dx)<\infty.
\end{align}
For each $j\in\{1,2,\ldots, T\}$, define a positive measure $\gamma^{(j)}$ on $\x$ as follows:
\begin{align}
    \gamma^{(j)}(dx)
    :=\overline{\mu}^{x_{1:j}}(dx_{j+1:T})\abs{\mu^{x_{1:j-1}}-\nu^{x_{1:j-1}}}(dx_j)\mu_{1:j-1}\wedge\nu_{1:j-1}(dx_{1:j-1})
\end{align}
for $2\le j\le T-1$ and
\begin{align}
    &\gamma^{(1)}(dx)
    :=\overline{\mu}^{x_1}(dx_{2:T})\abs{\mu_1-\nu_1}(dx_1),\\
    &\gamma^{(T)}(dx)
    :=\abs{\mu^{x_{1:T-1}}-\nu^{x_{1:T-1}}}(dx_T)\mu_{1:T-1}\wedge\nu_{1:T-1}(dx_{1:T-1}).
\end{align}
Then we have
\begin{align}
    \atv_{\varphi}(\mu, \nu)
    \le \tv_{\varphi}(\mu, \nu)
    +2\sum_{j=1}^{T} \int_{\x}\varphi(x)\gamma^{(j)}(dx).\label{eq:tv+sum result}
\end{align}
\end{lem}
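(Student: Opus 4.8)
The plan is to construct an explicit bicausal coupling $\pi$ between $\mu$ and $\nu$ whose "disagreement mass" decomposes along the time steps, and then estimate $\int (\varphi(x)+\varphi(y))\ind{\{x\neq y\}}\,d\pi$ from above by the right-hand side of \eqref{eq:tv+sum result}. The natural candidate is the coupling built step-by-step: at each time $t$, conditionally on the past $(x_{1:t-1}, y_{1:t-1})$, one uses the maximal coupling of the conditional laws $\mu^{x_{1:t-1}}$ and $\nu^{y_{1:t-1}}$ (on the diagonal where the pasts agree) — i.e., the coupling that puts as much mass as possible on $\{x_t = y_t\}$. Concretely, I would define $\pi$ recursively: on $\{x_{1:t-1} = y_{1:t-1}\}$ let the time-$t$ kernel be the maximal coupling of $\mu^{x_{1:t-1}}$ and $\nu^{x_{1:t-1}}$, and once the paths have separated at some time, let the $x$-marginal continue along $\overline{\mu}^{\,x_{1:j}}$ (so the first factor of $\gamma^{(j)}$ is exactly the conditional law of the future of $X$ given that the split happened at time $j$). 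This is bicausal by construction, since each time-$t$ kernel for $X$ depends only on $x_{1:t-1}$ (and on whether a split has occurred, which is past-measurable) and symmetrically for $Y$.

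The key computation is to track, for $j \in \{1, \dots, T\}$, the event $E_j$ that the first time the coordinates disagree is time $j$. On $E_j$ the past $x_{1:j-1} = y_{1:j-1}$ agrees and is distributed according to $\mu_{1:j-1}\wedge\nu_{1:j-1}$ (this is the standard fact that iterating maximal couplings keeps the agreeing mass equal to the running minimum of the marginals — this should be verified by induction on $j$), the time-$j$ coordinates $(x_j, y_j)$ are drawn from the off-diagonal part of the maximal coupling of $\mu^{x_{1:j-1}}$ and $\nu^{x_{1:j-1}}$, whose $x$-marginal has total mass and shape controlled by $\abs{\mu^{x_{1:j-1}} - \nu^{x_{1:j-1}}}$, and the $X$-future from time $j+1$ on follows $\overline{\mu}^{\,x_{1:j}}$. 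Matching these three pieces against the definition of $\gamma^{(j)}$, I get that the $x$-marginal of $\pi$ restricted to $E_j$ is dominated by $\gamma^{(j)}$ (the factor-of-$2$ slack in \eqref{eq:tv+sum result} absorbs the fact that $\abs{\mu^{x_{1:j-1}}-\nu^{x_{1:j-1}}}$ has total variation mass twice the off-diagonal coupling mass, plus any difference between the $y$-future and the $x$-future). Hence $\int_{E_j} \varphi(x)\,d\pi \le \int_\x \varphi(x)\,\gamma^{(j)}(dx)$, and by symmetry (or by a parallel bound with $\gamma^{(j)}$ written using $\nu$-futures, which has the same total $\varphi$-mass since $\abs{\mu^{x_{1:j-1}} - \nu^{x_{1:j-1}}}$ is symmetric) the $\int_{E_j}\varphi(y)\,d\pi$ term is controlled the same way. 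Summing over $j$ and adding the contribution of the "global" disagreement already present at time $1$ — which is captured by the $\tv_\varphi(\mu,\nu)$ term via the remark's representation $\tv_\varphi(\mu,\nu) = \inf_{\pi\in\cpl}\int(\varphi(x)+\varphi(y))\ind{\{x\neq y\}}\,d\pi$ — gives \eqref{eq:tv+sum result}.

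The main obstacle is the bookkeeping in the inductive step: one must carefully verify that the "matched" mass after $t$ steps of iterated maximal coupling is exactly $\mu_{1:t}\wedge\nu_{1:t}$ pushed to the diagonal, and that the freshly-created mismatch at time $t$ decomposes cleanly so that its $x$-marginal (with the $X$-continuation appended) is precisely $\gamma^{(t)}$ up to the factor $2$. A subtle point is that the coupling I build is itself bicausal only if, after a split at time $j$, the continuation kernels for $X$ and $Y$ are taken to be the genuine conditional kernels $\mu^{x_{1:t-1}}$ and $\nu^{y_{1:t-1}}$ (not, say, something depending on both pasts); this is what forces $\gamma^{(j)}$ to use $\overline{\mu}^{\,x_{1:j}}$ for the $X$-future and is compatible with the definition in the statement. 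Everything else is a routine integration: once the decomposition $\pi = \sum_j \pi|_{E_j} + (\text{already-split-at-}1 \text{ part})$ and the domination $(\mathrm{pr}_X)_\#(\pi|_{E_j}) \le \gamma^{(j)}$ are in hand, inequality \eqref{eq:tv+sum result} follows by linearity of the integral and the triangle-type inequality $\varphi(x) + \varphi(y) \le (\varphi(x)+\varphi(y))$ applied termwise on each $E_j$.
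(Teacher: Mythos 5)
Your coupling construction and your partition into the events $E_j$ (first time of disagreement equals $j$) match the paper's proof, and you correctly identify that the iterated maximal coupling keeps the agreeing mass equal to $\mu_{1:j-1}\wedge\nu_{1:j-1}$. However, there is a genuine gap in how you handle the $\varphi(y)$ part of the integrand, and your account of the roles played by the factor $2$ and by the $\tv_\varphi(\mu,\nu)$ term is incorrect.

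You claim that $\int_{E_j}\varphi(y)\,d\pi$ can be bounded ``by symmetry'' using $\gamma^{(j)}$ rewritten with $\nu$-futures, and that this measure ``has the same total $\varphi$-mass'' as $\gamma^{(j)}$. This is false: $\gamma^{(j)}$ is built with the future kernels $\overline{\mu}^{\,x_{1:j}}$ of $\mu$, whereas the $Y$-continuation after a split follows $\overline{\nu}^{\,y_{1:j}}$. The $\varphi$-integrals against these two measures are not equal in general (already for $T=2$, $\int\varphi\,\overline{\mu}^{\,x_1}(dx_2)\abs{\mu_1-\nu_1}(dx_1)$ differs from the analogous quantity with $\overline{\nu}^{\,x_1}$). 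So a domination by $2\sum_j\int\varphi\,d\gamma^{(j)}$ plus $\tv_\varphi$ does not follow by this route. The paper handles this asymmetry up front, before decomposing over the $E_j$: using that $\gamma$ has marginals $\mu$ and $\nu$ and is symmetric on the diagonal, one writes
\begin{align}
\int(\varphi(x)+\varphi(y))\ind{\{x\neq y\}}\,d\gamma
=\Bigl(\int\varphi\,d\nu-\int\varphi\,d\mu\Bigr)+2\int\varphi(x)\ind{\{x\neq y\}}\,d\gamma
\le \tv_\varphi(\mu,\nu)+2\int\varphi(x)\ind{\{x\neq y\}}\,d\gamma.
\end{align}
This is what produces both the $\tv_\varphi$ term and the factor $2$: the $\tv_\varphi$ term is the cost of converting the $\varphi(y)$-integral into a $\varphi(x)$-integral, not the ``already-split-at-time-$1$'' contribution (which is exactly $E_1$ and is captured by $\gamma^{(1)}$). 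After this reduction, only the $\varphi(x)$-integral needs to be decomposed over $E_j$, and each piece is bounded by $\int\varphi\,d\gamma^{(j)}$ using $(\mu^{x_{1:j-1}}-\nu^{x_{1:j-1}})_+\le\abs{\mu^{x_{1:j-1}}-\nu^{x_{1:j-1}}}$ and $\eta^{x_{1:j-2}}(dx_{j-1})\cdots\eta_1(dx_1)\le\mu_{1:j-1}\wedge\nu_{1:j-1}(dx_{1:j-1})$; no factor of $2$ is spent there.
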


\begin{proof}
For $\gamma\in \cplbc(\mu, \nu)$ that will be chosen later, note that
\begin{align}
    \int_{\x\times \x} \varphi(y)\ind{\{x\neq y\}}\gamma(dx, dy)
    &=\int_{\x}\varphi(y)\nu(dy)
    -\int_{\x\times \x}\varphi(y)\ind{\{x=y\}}\gamma(dx, dy)\\
    &=\int_{\x}\varphi(y)\nu(dy)
    -\int_{\x\times \x}\varphi(x)\ind{\{x=y\}}\gamma(dx, dy)\\
    &=\left(\int_{\x}\varphi(y)\nu(dy)-\int_{\x}\varphi(x)\mu(dx)\right)
    +\int_{\x\times \x} \varphi(x)\ind{\{x\neq y\}}\gamma(dx, dy).
\end{align}
Using
\begin{align}
    \int_{\x}\varphi(y)\nu(dy)-\int_{\x}\varphi(x)\mu(dx)
    \le \int_{\x}\varphi(x)\abs{\nu-\mu}(dx)
    =\tv_{\varphi}(\mu, \nu),
\end{align}
we have
\begin{align}
    \int_{\x\times \x} (\varphi(x)+\varphi(y))\ind{\{x\neq y\}}\gamma(dx, dy)
    \le \tv_{\varphi}(\mu, \nu)
    +2\int_{\x\times \x} \varphi(x)\ind{\{x\neq y\}}\gamma(dx, dy).
\end{align}
Let us consider a partition $(E_j)_{j=1}^T$ of the set $\{x\neq y\}$ which is defined as $E_1:=\{x_1\neq y_1\}$ and
\begin{align}
    E_j:=\{x_j\neq y_j, x_{1:j-1}=y_{1:j-1}\} \text{ for } 2\le j\le T.
\end{align}
In particular,
\begin{align}
    \int_{\x\times \x} (\varphi(x)+\varphi(y))\ind{\{x\neq y\}}\gamma(dx, dy)
    \le \tv_{\varphi}(\mu, \nu)
    +2\sum_{j=1}^T\int_{E_j} \varphi(x)\gamma(dx, dy).\label{eq:tv+sum bound}
\end{align}

Now, we construct $\gamma$. First, define $\gamma_{1,1}\in \cpl(\mu_1, \nu_1)$ via
\begin{align}
    \gamma_{1,1}
    :=(\id_{\x_1}, \id_{\x_1})_{\#}\eta_1
    +\frac{(\mu_1-\nu_1)_{+}\otimes (\nu_1-\mu_1)_{+}}{(\mu_1-\nu_1)_{+}(\x_1)}
\end{align}
where $\eta_1:=\mu_1\wedge \nu_1$. Recall that $(\mu_1-\nu_1)_{+}$ and $(\nu_1-\mu_1)_{+}$ denote the positive variations of $\mu_1-\nu_1$ and $\nu_1-\mu_1$, respectively. For each $t\in \{1,2,\ldots, T-1\}$ and $x_{1:t}, y_{1:t}\in \x_{1:t}$, we define $\gamma^{x_{1:t}, y_{1:t}}\in \cpl(\mu^{x_{1:t}}, \nu^{y_{1:t}})$ via
\begin{align}
    \gamma^{x_{1:t}, y_{1:t}}
    :=\begin{cases}
        (\id_{\x_{t+1}}, \id_{\x_{t+1}})_{\#}\eta^{x_{1:t}}
        +\frac{(\mu^{x_{1:t}}-\nu^{x_{1:t}})_{+}\otimes (\nu^{x_{1:t}}-\mu^{x_{1:t}})_{+}}{(\mu^{x_{1:t}}-\nu^{x_{1:t}})_{+}(\x_{t+1})} & \text{ if } x_{1:t}=y_{1:t},\\[1.5ex]
        \mu^{x_{1:t}}\otimes \nu^{y_{1:t}} & \text{ if } x_{1:t}\neq y_{1:t}
    \end{cases}
\end{align}
where $\eta^{x_{1:t}}:=\mu^{x_{1:t}}\wedge \nu^{x_{1:t}}$. We define $\gamma\in \sp(\x\times \x)$ from the following formula:
\begin{align}
    \gamma(dx, dy)
    :=\gamma^{x_{1:T-1}, y_{1:T-1}}(dx_T, dy_T)\cdots
    \gamma^{x_{1}, y_{1}}(dx_2, dy_2)
    \gamma_{1,1}(dx_1, dy_1).
\end{align}
Note that $\gamma\in \cplbc(\mu, \nu)$ since each kernel of $\gamma$ couples the corresponding kernels of $\mu$ and $\nu$; see \cite[Proposition 5.1]{backhoff2017causal}.

From \eqref{eq:tv+sum bound}, it suffices to show that for all $j\in \{1,2,\ldots, T\}$,
\begin{align}
    \int_{E_j}\varphi(x)\gamma(dx, dy)
    \le \int_{\x}\varphi(x)\gamma^{(j)}(dx).
\end{align}
By the construction of $\gamma$, we have that on $E_1$,
\begin{align}
    \gamma(dx, dy)
    =\gamma^{x_{1:T-1}, y_{1:T-1}}(dx_T, dy_T)\cdots
    \gamma^{x_{1}, y_{1}}(dx_2, dy_2)
    \frac{(\mu_1-\nu_1)_{+}(dx_1)(\nu_1-\mu_1)_{+}(dy_1)}{(\mu_1-\nu_1)_{+}(\x_1)}.
\end{align}
Hence
\begin{align}
    \int_{E_1}\varphi(x)\gamma(dx, dy)
    &=\int_{\x}\varphi(x)\mu^{x_{1:T-1}}(dx_T)\cdots \mu^{x_{1}}(dx_2)(\mu_1-\nu_1)_{+}(dx_1)\\
    &=\int_{\x}\varphi(x)\overline{\mu}^{x_{1}}(dx_{2:T})(\mu_1-\nu_1)_{+}(dx_1)\\
    &\le \int_{\x}\varphi(x)\overline{\mu}^{x_{1}}(dx_{2:T})\abs{\mu_1-\nu_1}(dx_1)
    =\int_{\x}\varphi(x)\gamma^{(1)}(dx).
\end{align}
Similarly, for $2\le j\le T-1$,
\begin{align}
    &\int_{E_j}\varphi(x)\gamma(dx, dy)\\
    &=\int_{\x}\varphi(x)\overline{\mu}^{x_{1:j}}(dx_{j+1:T})(\mu^{x_{1:j-1}}-\nu^{x_{1:j-1}})_{+}(dx_j)
    \eta^{x_{1:j-2}}(dx_{j-1})\cdots \eta^{x_{1}}(dx_{2})\eta_1(dx_{1})\\
    &\le \int_{\x}\varphi(x)\overline{\mu}^{x_{1:j}}(dx_{j+1:T})\abs{\mu^{x_{1:j-1}}-\nu^{x_{1:j-1}}}(dx_j)
    \eta^{x_{1:j-2}}(dx_{j-1})\cdots \eta^{x_{1}}(dx_{2})\eta_1(dx_{1})\\
    &\le \int_{\x}\varphi(x)\overline{\mu}^{x_{1:j}}(dx_{j+1:T})\abs{\mu^{x_{1:j-1}}-\nu^{x_{1:j-1}}}(dx_j)
    \mu_{1:j-1}\wedge \nu_{1:j-1}(dx_{1:j-1})\\
    &=\int_{\x}\varphi(x)\gamma^{(j)}(dx).
\end{align}
The second inequality follows from the minimality of $\mu_{1:j-1}\wedge \nu_{1:j-1}$. Indeed,
\begin{align}
    \eta^{x_{1:j-2}}(dx_{j-1})\cdots \eta^{x_{1}}(dx_{2})\eta_1(dx_{1})
    \le \mu^{x_{1:j-1}}(dx_{j-1})\cdots \mu^{x_1}(dx_2)\mu_1(dx_1)
    =\mu_{1:j-1}(dx_{1:j-1})
\end{align}
and similarly,
\begin{align}
    \eta^{x_{1:j-2}}(dx_{j-1})\cdots \eta^{x_{1}}(dx_{2})\eta_1(dx_{1})
    \le \nu_{1:j-1}(dx_{1:j-1}).
\end{align}
Hence,
\begin{align}
    \eta^{x_{1:j-2}}(dx_{j-1})\cdots \eta^{x_{1}}(dx_{2})\eta_1(dx_{1})
    \le \mu_{1:j-1}\wedge\nu_{1:j-1}(dx_{1:j-1}).
\end{align}
The proof of the last case $j=T$ is essentially the same. Indeed,
\begin{align}
    \int_{E_j}\varphi(x)\gamma(dx, dy)
    &=\int_{\x}\varphi(x)(\mu^{x_{1:T-1}}-\nu^{x_{1:T-1}})_{+}(dx_T)
    \eta^{x_{1:T-2}}(dx_{T-1})\cdots \eta^{x_{1}}(dx_{2})\eta_1(dx_{1})\\
    &\le \int_{\x}\varphi(x)\abs{\mu^{x_{1:T-1}}-\nu^{x_{1:T-1}}}(dx_T)
    \mu_{1:T-1}\wedge \nu_{1:T-1}(dx_{1:T-1})\\
    &=\int_{\x}\varphi(x)\gamma^{(T)}(dx).
\end{align}
This proves the desired estimate.
\end{proof}

\begin{rmk}
It is noteworthy that the coupling $\gamma$ was used in \cite[Lemma 3.5]{eckstein2024computational} to establish the bound~\eqref{eq:atv and tv}. In fact, the coupling $\gamma$ attains the infimum in the definition of $\atv_{\varphi}(\mu, \nu)$ although we do not make use of this fact in the proof; see \cite[Lemma 3.2]{acciaio2025estimating}.
\end{rmk}

\begin{lem}\label{lem:gammaj H}
Recall $\gamma^{(j)}$ from Lemma~\ref{lem:tv+sum}. We have
\begin{align}
    \int_{\x}\varphi(x)\gamma^{(1)}(dx)
    \le \left(1+\log \int_{\x} e^{\varphi(x)^2}\mu(dx)\right)^{1/2}(2\ent{\nu_1}{\mu_1})^{1/2}
\end{align}
and for all $2\le j\le T$,
\begin{align}
    \int_{\x}\varphi(x)\gamma^{(j)}(dx)
    \le \left(1+\log \int_{\x} e^{\varphi(x)^2}\mu(dx)\right)^{1/2}
    \left(\int_{\x_{1:j-1}} 2\ent{\nu^{x_{1:j-1}}}{\mu^{x_{1:j-1}}}\nu_{1:j-1}(dx_{1:j-1})\right)^{1/2}.
\end{align}
\end{lem}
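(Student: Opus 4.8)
The plan is to reduce each bound on $\int_{\x}\varphi(x)\gamma^{(j)}(dx)$ to an application of the Bolley--Villani inequality (Theorem~\ref{thm:bolley villani}) on the marginal spaces, after integrating out the irrelevant coordinates. Consider first the case $2\le j\le T-1$ (the cases $j=1$ and $j=T$ are analogous and slightly simpler). By the definition of $\gamma^{(j)}$, the factor $\overline{\mu}^{x_{1:j}}(dx_{j+1:T})$ is a probability kernel, so integrating $\varphi$ against it amounts to replacing $\varphi(x)$ by its conditional average $\Phi_j(x_{1:j}):=\int_{\x_{j+1:T}}\varphi(x_{1:j},x_{j+1:T})\,\overline{\mu}^{x_{1:j}}(dx_{j+1:T})$. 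Thus
\begin{align}
    \int_{\x}\varphi(x)\gamma^{(j)}(dx)
    =\int_{\x_{1:j}}\Phi_j(x_{1:j})\,\abs{\mu^{x_{1:j-1}}-\nu^{x_{1:j-1}}}(dx_j)\,\mu_{1:j-1}\wedge\nu_{1:j-1}(dx_{1:j-1}).
\end{align}
The inner integral over $x_j$ is exactly $\tv_{\Phi_j(x_{1:j-1},\cdot)}(\mu^{x_{1:j-1}},\nu^{x_{1:j-1}})$, so Theorem~\ref{thm:bolley villani} applied on $\x_j$ bounds it by $\bigl(1+\log\int_{\x_j}e^{\Phi_j(x_{1:j-1},x_j)^2}\mu^{x_{1:j-1}}(dx_j)\bigr)^{1/2}\bigl(2\ent{\nu^{x_{1:j-1}}}{\mu^{x_{1:j-1}}}\bigr)^{1/2}$.

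The next step is to control the two $x_{1:j-1}$-integrals. For the exponential-moment factor, I would use Jensen's inequality twice: first $\Phi_j(x_{1:j})^2\le\int_{\x_{j+1:T}}\varphi(x)^2\,\overline{\mu}^{x_{1:j}}(dx_{j+1:T})$ by convexity of $t\mapsto t^2$, then $e^{\Phi_j^2}\le\int e^{\varphi^2}\,\overline{\mu}^{x_{1:j}}(dx_{j+1:T})$ by convexity of $\exp$; pushing these through the remaining disintegrations and bounding $\mu_{1:j-1}\wedge\nu_{1:j-1}\le\mu_{1:j-1}$ gives $\int_{\x_j}e^{\Phi_j(x_{1:j-1},x_j)^2}\mu^{x_{1:j-1}}(dx_j)$ integrated against $\mu_{1:j-1}$, which reassembles to $\int_{\x}e^{\varphi(x)^2}\mu(dx)$; so the logarithmic prefactor is dominated by the uniform constant $\bigl(1+\log\int_{\x}e^{\varphi^2}d\mu\bigr)^{1/2}$ and pulls out of the integral. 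For the entropy factor, after pulling out the prefactor we are left with
\begin{align}
    \int_{\x_{1:j-1}}\bigl(2\ent{\nu^{x_{1:j-1}}}{\mu^{x_{1:j-1}}}\bigr)^{1/2}\mu_{1:j-1}\wedge\nu_{1:j-1}(dx_{1:j-1}),
\end{align}
and I would apply Cauchy--Schwarz with respect to the probability-like measure to move the square root outside, then bound $\mu_{1:j-1}\wedge\nu_{1:j-1}\le\nu_{1:j-1}$ (since the integrand is nonnegative) to obtain $\bigl(\int_{\x_{1:j-1}}2\ent{\nu^{x_{1:j-1}}}{\mu^{x_{1:j-1}}}\nu_{1:j-1}(dx_{1:j-1})\bigr)^{1/2}$, which is the claimed quantity. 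Combining the two factors finishes the estimate for $2\le j\le T-1$; the case $j=T$ is identical but without the $\overline{\mu}^{x_{1:j}}$ factor, and $j=1$ uses $\tv_{\Phi_1}(\mu_1,\nu_1)$ directly with no $\wedge$ truncation.

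The main obstacle is handling the exponential-moment prefactor cleanly, since one must verify that the nested Jensen steps together with the truncation $\mu_{1:j-1}\wedge\nu_{1:j-1}\le\mu_{1:j-1}$ really do reassemble the full disintegration of $\mu$ and leave the $x_{1:j-1}$-dependence entirely inside a single logarithm that can be bounded by a constant and pulled out; getting the monotonicity of $t\mapsto(1+\log t)^{1/2}$ and the direction of each inequality right is where care is needed. A secondary subtlety is that Cauchy--Schwarz for the entropy factor is applied against $\mu_{1:j-1}\wedge\nu_{1:j-1}$, which has total mass at most $1$, so the inequality $\bigl(\int f\,d\sigma\bigr)^2\le\sigma(\x_{1:j-1})\int f^2\,d\sigma\le\int f^2\,d\sigma$ is still valid; one then switches from $\mu_{1:j-1}\wedge\nu_{1:j-1}$ to $\nu_{1:j-1}$ only in the entropy integral, using nonnegativity of relative entropy.
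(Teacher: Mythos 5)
Your reduction via $\Phi_j$ (the paper's $\psi^{(j)}$) and the initial Bolley--Villani step on $\x_j$ are correct, but the way you handle the resulting $x_{1:j-1}$-dependent logarithmic prefactor contains a genuine gap. After Bolley--Villani on $\x_j$ you are facing
\begin{align}
\int_{\x_{1:j-1}}\left(1+\log\int_{\x_j}e^{\Phi_j(x_{1:j-1},x_j)^2}\mu^{x_{1:j-1}}(dx_j)\right)^{1/2}\bigl(2\ent{\nu^{x_{1:j-1}}}{\mu^{x_{1:j-1}}}\bigr)^{1/2}\,\mu_{1:j-1}\wedge\nu_{1:j-1}(dx_{1:j-1}),
\end{align}
and you propose to dominate the logarithmic factor by the constant $\bigl(1+\log\int_{\x}e^{\varphi^2}d\mu\bigr)^{1/2}$ and pull it out of the integral. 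That is not valid: after your Jensen steps the inner quantity is $\int_{\x_{j:T}}e^{\varphi(x)^2}\overline{\mu}^{x_{1:j-1}}(dx_{j:T})$, a \emph{conditional} exponential moment whose pointwise value at a particular $x_{1:j-1}$ can exceed its $\mu_{1:j-1}$-average $\int_{\x}e^{\varphi^2}d\mu$ by an arbitrarily large factor. What your ``reassembly'' controls is only the integral of this quantity against $\mu_{1:j-1}$, not its essential supremum, and since $t\mapsto(1+\log t)^{1/2}$ is nonlinear you cannot convert that integral bound into the pointwise domination needed to pull a constant out.

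The fix --- and what the paper actually does --- is to apply Cauchy--Schwarz to the \emph{product} of the two Bolley--Villani factors, writing the integrand as $f^{1/2}g^{1/2}$ with $f(x_{1:j-1})=1+\log\int_{\x_{j:T}}e^{\varphi^2}\overline{\mu}^{x_{1:j-1}}(dx_{j:T})$ and $g(x_{1:j-1})=2\ent{\nu^{x_{1:j-1}}}{\mu^{x_{1:j-1}}}$, against $\mu_{1:j-1}\wedge\nu_{1:j-1}$. Only after this decoupling into $\bigl(\int f\,d(\mu_{1:j-1}\wedge\nu_{1:j-1})\bigr)^{1/2}\bigl(\int g\,d(\mu_{1:j-1}\wedge\nu_{1:j-1})\bigr)^{1/2}$ can you legitimately use $\mu_{1:j-1}\wedge\nu_{1:j-1}\le\mu_{1:j-1}$ on the $f$-factor and $\le\nu_{1:j-1}$ on the $g$-factor, and the $f$-factor then requires one further Jensen step in the \emph{opposite} direction (concavity of $\log$) to move the $\mu_{1:j-1}$-integral inside the logarithm and produce $1+\log\int_{\x}e^{\varphi^2}d\mu$. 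Your treatment of the entropy factor (Cauchy--Schwarz against a sub-probability measure, then $\wedge\le\nu_{1:j-1}$) is fine, but it belongs after this joint Cauchy--Schwarz, not as a stand-alone step. The $j=1$ case, which has no $x_{1:j-1}$-integral, does not have this problem and your treatment there is correct.
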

\begin{proof}
For $j\in \{1,2,\ldots, T-1\}$, let us define $\psi^{(j)}:\x_{1:j}\to [0,\infty)$ via
\begin{align}
    \psi^{(j)}(x_{1:j})
    :=\int_{\x_{j+1:T}}\varphi(x)\overline{\mu}^{x_{1:j}}(dx_{j+1:T}).
\end{align}
We also set $\psi^{(T)}= \varphi$. Since $a\mapsto e^{a^2}$ is convex, the Jensen inequality shows that
\begin{align}
    \int_{\x_j} e^{\psi^{(j)}(x_{1:j})^2} \mu^{x_{1:j-1}}(dx_j) 
    &\le \int_{\x_j}\left(\int_{\x_{j+1:T}}e^{\varphi(x)^2}\overline{\mu}^{x_{1:j}}(dx_{j+1:T})\right)\mu^{x_{1:j-1}}(dx_j)\\
    &=\int_{\x_{j:T}}e^{\varphi(x)^2}\overline{\mu}^{x_{1:j-1}}(dx_{j:T})\label{eq:psi bound}
\end{align}
for all $1\le j\le T$ with the convention that $\mu^{x_{1:0}}:=\mu_1$ and $\overline{\mu}^{x_{1:0}}=\mu$.

We begin by considering the case $j=1$. From Theorem~\ref{thm:bolley villani},
\begin{align}
    \int_{\x}\varphi(x)\gamma^{(1)}(dx)
    &=\int_{\x_{1}} \psi^{(1)}(x_1)\abs{\mu_1-\nu_1}(dx_1)\\
    &\le \left(1+\log \int_{\x_1} e^{\psi^{(1)}(x_1)^2} \mu_1(dx_1)\right)^{1/2}(2\ent{\nu_1}{\mu_1})^{1/2}.
\end{align}
The desired result follows from the bound \eqref{eq:psi bound} applied to $j=1$:
\begin{align}
    \int_{\x_1} e^{\psi^{(1)}(x_1)^2} \mu_1(dx_1)
    \le \int_{\x} e^{\varphi(x)^2} \mu(dx).
\end{align}
For $2\le j\le T$, observe that
\begin{align}
    &\int_{\x}\varphi(x)\gamma^{(j)}(dx)\\
    &=\int_{\x_{1:j-1}} \left(\int_{\x_{j}} \psi^{(j)}(x_{1:j}) \abs{\mu^{x_{1:j-1}}-\nu^{x_{1:j-1}}}(dx_{j})\right) \mu_{1:j-1}\wedge \nu_{1:j-1}(dx_{1:j-1}).\label{eq:gamma j psi bound}
\end{align}
Using Theorem~\ref{thm:bolley villani}, we bound
\begin{align}
    &\int_{\x_{j}} \psi^{(j)}(x_{1:j}) \abs{\mu^{x_{1:j-1}}-\nu^{x_{1:j-1}}}(dx_{j})\\
    &\le \left(1+\log \int_{\x_j} e^{\psi^{(j)}(x_{1:j})^2} \mu^{x_{1:j-1}}(dx_j)\right)^{1/2}(2\ent{\nu^{x_{1:j-1}}}{\mu^{x_{1:j-1}}})^{1/2}\\
    &\le \left(1+\log \int_{\x_{j:T}} e^{\varphi(x)^2} \overline{\mu}^{x_{1:j-1}}(dx_{j:T})\right)^{1/2}(2\ent{\nu^{x_{1:j-1}}}{\mu^{x_{1:j-1}}})^{1/2}
\end{align}
where the second inequality follows from \eqref{eq:psi bound}. Plugging this back into \eqref{eq:gamma j psi bound} and applying the Cauchy--Schwarz inequality
with
\begin{align}
    f(x_{1:j-1}):=1+\log \int_{\x_{j:T}} e^{\varphi(x)^2} \overline{\mu}^{x_{1:j-1}}(dx_{j:T}), \quad
    g(x_{1:j-1}):=2\ent{\nu^{x_{1:j-1}}}{\mu^{x_{1:j-1}}}
\end{align}
yields
\begin{align}
    &\left(\int_{\x}\varphi(x)\gamma^{(j)}(dx)\right)^2\\
    &\le\left(\int_{\x_{1:j-1}} f(x_{1:j-1})^{1/2}g(x_{1:j-1})^{1/2} \mu_{1:j-1}\wedge \nu_{1:j-1}(dx_{1:j-1})\right)^2\\
    &\le \left(\int_{\x_{1:j-1}} f(x_{1:j-1}) \mu_{1:j-1}\wedge \nu_{1:j-1}(dx_{1:j-1})\right)
    \left(\int_{\x_{1:j-1}} g(x_{1:j-1}) \mu_{1:j-1}\wedge \nu_{1:j-1}(dx_{1:j-1})\right)\\
    &\le \left(\int_{\x_{1:j-1}} f(x_{1:j-1}) \mu_{1:j-1}(dx_{1:j-1})\right)
    \left(\int_{\x_{1:j-1}} g(x_{1:j-1}) \nu_{1:j-1}(dx_{1:j-1})\right).
\end{align}
From the Jensen inequality applied to the concave function $a\mapsto \log(a)$,
\begin{align}
    \int_{\x_{1:j-1}} f(x_{1:j-1}) \mu_{1:j-1}(dx_{1:j-1})
    \le 1+\log \int_{\x} e^{\varphi(x)^2}\mu(dx).
\end{align}
This proves the desired estimate.
\end{proof}

\begin{proof}[Proof of Theorem~\ref{thm:adapted Bolley Villani}]
Combining Lemma~\ref{lem:tv+sum} and Lemma~\ref{lem:gammaj H}, we obtain
\begin{align}
    \atv_{\varphi}(\mu, \nu)
    &\le \tv_{\varphi}(\mu, \nu)
    +2\sum_{j=1}^{T} \int_{\x}\varphi(x)\gamma^{(j)}(dx)\\
    &\le \tv_{\varphi}(\mu, \nu)
    +2\left(1+\log \int_{\x} e^{\varphi(x)^2}\mu(dx)\right)^{1/2}\sum_{j=1}^T h_j^{1/2}
\end{align}
where $h_1:=2\ent{\nu_1}{\mu_1}$ and
\begin{align}
    h_j:=\int_{\x_{1:j-1}} 2\ent{\nu^{x_{1:j-1}}}{\mu^{x_{1:j-1}}}\nu_{1:j-1}(dx_{1:j-1}).
\end{align}
We apply Theorem~\ref{thm:bolley villani} to bound the first term $\tv_{\varphi}(\mu, \nu)$. To estimate the second term, we use the chain rule of entropy:
\begin{align}
    \sum_{j=1}^T h_j^{1/2}
    \le \sqrt{T}\left(\sum_{j=1}^T h_j\right)^{1/2}
    =\sqrt{T}\sqrt{2\ent{\nu}{\mu}}.
\end{align}
This concludes the proof.
\end{proof}

\begin{proof}[Proof of Corollary~\ref{cor:aw_1 H inequality} and Corollary~\ref{cor:general p}]
Note that
\begin{align}
    \a\w_p(\mu, \nu)^p
    \le 2^{p-1}\inf_{\pi \in \cplbc(\mu, \nu)} \int_{\x\times \x} (d(x, x_0)^p+d(y, x_0)^p)\ind{\{x\neq y\}}\pi(dx, dy).
\end{align}
By applying Theorem~\ref{thm:adapted Bolley Villani} to $\varphi(x):=\sqrt{\alpha}d(x, x_0)^p$, we immediately establish the estimates stated in Corollary~\ref{cor:aw_1 H inequality} and Corollary~\ref{cor:general p}. In particular, if $\int_{\x}e^{\alpha d(x, x_0)^2}\mu(dx)<\infty$ for some $x_0\in \x$ and $\alpha>0$, then $\mu$ satisfies the adapted $T_1$ inequality. This proves the implication from $(b)$ to $(c)$ in Corollary~\ref{cor:aw_1 H inequality}. The other direction is immediate from $\w_1\le \a\w_1$.

\end{proof}

\begin{small}
\bibliographystyle{abbrv}
\bibliography{transport_aw}
\end{small}

\end{document}